\numberwithin{equation}{section}
\newtheorem{theorem}{Theorem}[section]
\newtheorem{prop}[theorem]{Proposition}
\newtheorem{lem}[theorem]{Lemma}
\theoremstyle{remark}
\newtheorem{rem}[theorem]{Remark}
\newcommand{\R}{\mathbb{R}}
\newcommand{\N}{\mathbb{N}}
\author[M.~Ltifi]{Maroua Ltifi}
\address{Department of Mathematics, Faculty of Science of Gab\`es, university of Gab\`es; Tunisia}
\email{\sl widaltifi@gmail.com}
\title[Strong solution of the three-dimensional $(3D)$ incompressible magneto-hydrodynamic $(MHD)$ equationss with a modified damping ]
{Strong solution of the three-dimensional $(3D)$ incompressible magneto-hydrodynamic $(MHD)$ equations with modified damping}
\begin{document}
	\begin{abstract}
		This study delves into a comprehensive examination of the three-dimensional $(3D)$ incompressible magneto-hydrodynamic $(MHD)$ equations in $H^{1}(\R^{3})$. The modification involves incorporating a power term in the nonlinear convection component, a particularly relevant adjustment in porous media scenarios, especially when the fluid adheres to the Darcy-Forchheimer law instead of the conventional Darcy law. Our main contributions include establishing global existence over time and demonstrating the uniqueness of solutions. It is important to note that these achievements are obtained with smallness conditions on the initial data, but under the condition that $\beta >3$ and $\alpha>0$
		. However, when $\beta=3$, the problem is limited to the case $0<\alpha<\frac{1}{2}$ as the above inequality is unsolvable for these values of $\alpha$ using our method. To support our statement, we will add a "slight disturbance" of the function f of the type $f(z)=log(e+z^{2})$ or $\log(\log(e^{e}+z^{2}))$ or even $\log(\log(\log((e^{e^{e}})+z^{2})))$.
	\end{abstract}

	
	\subjclass[2010]{35-XX, 35Q30, 76N10}
	\keywords{Magneto-hydrodynamic $(MHD)$ equations; Navier-Stokes Equations; Critical spaces; Long time decay}

	\maketitle
	\tableofcontents

	
	\section{\bf Introduction}
	
	in This paper, we study the following magnetohydrodynamic system with damping: 
	$$(MHD_{D})\label{sys1}
	\begin{cases}
		\partial_t u
		-\Delta_{h} u-\partial_3^{2}u+ u\nabla u  + b\nabla b+\alpha |u|^{\beta-1}u =\;\;-\nabla p\hbox{ in } \mathbb R^+\times \mathbb R^3\\
		\partial_t b-\Delta_{h} b-\partial_3^{2}b+ b\nabla b  -u\nabla b =\;\;0\\
		{\rm div}\, u = 0, {\rm div}\, b = 0\hbox{ in } \mathbb R^+\times \mathbb R^3\\
		u(0,x) =u^0(x), b(0,x) =b^0(x) \;\;\hbox{ in }\mathbb R^3.
	\end{cases}
	$$
	where $u=u(t,x)=(u_1,u_2,u_3)$, $b=b(t,x)=(b_1,b_2,b_3)$  and $p=p(t,x)$ denote respectively the unknown velocity, the magnetic field and the unknown pressure of the fluid at the point $(t,x)\in \mathbb R^+\times \mathbb R^3$, $\alpha>0$ and $\beta>1$. The terms $v\nabla w:=v_1\partial_1 w+v_2\partial_2 w+v_3\partial_3w$, while $u^0=(u_1^{0}(x),u_2^{0}(x),u_3^{0}(x))$ is an initial given velocity. If $u^0$ is quite regular, the divergence free condition determines the pressure $p$.
	 The damping arises from the resistance to the motion of the flow, describing various physical situations such as flow through porous media, drag or friction effects, and some dissipative mechanisms (see \cite{CJ},\cite{DB1},\cite{DB2},\cite{J},\cite{l}). When $b^{0}=0$ system $(MHD_{D})$ reduces to the Navier-Stokes system with damping.
	It was studied, in the beginning, by Cai and Jiu \cite{CJ} in 2008. They are shown by Galerkin's methods the existence of global weak solution $u\in L^{\infty}(L^{2}(\R^3))\cap L^{2}(\dot{H}^{1}(\R^{3}))\cap L^{\beta+1}(L^{\beta+1}(\R^{3}))$ for $\beta\geq1$, global strong solution for any $\beta\geq\frac{7}{2}$ and that the strong solution is unique for any $\frac{7}{2}\leq\beta\leq5$.\\
	In this sheet we study the large time behaviour in Fourier norms of the solution to the magnetohydrodynamic system with damping in three spatial dimensions in $H^{^1}(\R^{3})$.
		\begin{theorem}\label{the1} For $\alpha>0$ and 	$\beta>3$,
		consider divergence-free vector fields $u^{0}$ and $b^{0}$ $\in H^{1}(\R^{3})$ such that $\|(u^{0},b^{0})\|_{H^{1}}<<\epsilon_{0},$ with $\epsilon_{0}$ is small enough.
		 There exists a global solution $w=(u,b)$ of the magnetohydrodynamics equation $(MHD_{D})$ satisfying the following properties:
		$w=(u,b)\in L^{\infty}(\R^{+},H^{1}(\R^{3}))\cap C(\R^{+},L^{2})\cap L^{2}(\R^{+},\dot{H}^{2}(\R^{3}))\cap L^{\beta+1}(\R^{+},L^{\beta+1}(\R^{3}))$ and $|u|^{\beta-3} |\nabla|u_n|^2|^2$,$|u|^{\beta-1} \nabla |u|^2\in L_{loc}^{1}(\R^{+},L^{1}(\R^{3}))$ we have
		\begin{align}\label{eqth01}
			\|w\|_{L^{2}}^{2}+2\int_{0}^{t}\|\nabla w\|_{L^{2}}^{2}+2\alpha \int_{0}^{t}\|u\|^{\beta+1}_{L^{\beta+1}}\leq\|w^{0}\|^{2}_{L^{2}}.
		\end{align}
		and
		
		\begin{align}\label{eqth02}
			\|\nabla w(t)\|_{L^{2}}^{2}+\int_0^t\|\Delta w\|^2_{L^2}+\alpha\frac{(\beta-1)}{2}\int_0^t\||u|^{\beta-3} |\nabla|u|^2|^2\|_{L^1}+\alpha \int_0^t\||u|^{\beta-1} |\nabla u|^2\|_{L^1}& \leq \|\nabla w^0\|_{L^{2}}^{2}+c_{\alpha,\beta}\|w^0\|_{L^{2}}^{2}\end{align}
		\begin{align}\label{eqth03}
			\|\nabla w(t)\|_{L^{2}}^{2}+\int_0^t\|\Delta w\|^2_{L^2}+\alpha\frac{(\beta-1)}{2}\int_0^t\||u|^{\beta-3} |\nabla|u|^2|^2\|_{L^1}+\alpha \int_0^t\||u|^{\beta-1} |\nabla u|^2\|_{L^1} &\leq \|\nabla w^{0}\|^{2}_{L^{2}}e^{2c_{\alpha,\beta}t}.
		\end{align}
	where, $c_{\alpha,\beta}=\frac{1}{2}\frac{\beta-3}{\beta-1}.\Big(\frac{\alpha(\beta-1)}{2}\Big)^{-\frac{2}{\beta-3}}$and $C$ is a constant that depends on the product law  of the Sobolev spaces.
	\end{theorem}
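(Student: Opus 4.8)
The plan is to follow the classical Galerkin–Friedrichs scheme together with the two energy identities that the statement isolates, the delicate point being that only the velocity field carries a damping dissipation. First I would build approximate solutions: letting $\mathbb{P}$ be the Leray projector and $J_n$ a frequency truncation (equivalently a Galerkin basis of divergence-free fields), projecting $(MHD_{D})$ onto the first $n$ modes gives a finite-dimensional ODE system in which every nonlinearity, including the damping $|u|^{\beta-1}u$, is locally Lipschitz. The Cauchy--Lipschitz theorem yields approximations $w_n=(u_n,b_n)$ on a maximal interval, and the goal of the next two steps is to produce bounds uniform in $n$ and global in time, so that the maximal interval is all of $\R^+$ and one may pass to the limit.

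Second, the $L^2$ bound. Pairing the $u$-equation with $u_n$ and the $b$-equation with $b_n$ in $L^2$, the pressure drops out by $\mathrm{div}\,u_n=0$, the self-convection terms vanish since $\int(v\cdot\nabla w)\cdot w=0$ for divergence-free $v$, and the magnetic coupling terms combine, after one integration by parts exploiting the incompressibility of $u_n$ and $b_n$, to leave no net contribution. What survives is the viscous term $\|\nabla w_n\|_{L^2}^2$ and the damping contribution $\alpha\|u_n\|_{L^{\beta+1}}^{\beta+1}$; integrating in time produces exactly \eqref{eqth01}, and in particular the uniform control $\int_0^t\|\nabla w_n\|_{L^2}^2\le\tfrac12\|w^0\|_{L^2}^2$ that will feed the next step.

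Third, the $\dot H^1$ bound, which is the heart of the matter. Pairing the equations with $-\Delta w_n$, the viscous term yields $\|\Delta w_n\|_{L^2}^2$, and integrating the damping term by parts produces the two sign-definite quantities $\alpha\int|u_n|^{\beta-1}|\nabla u_n|^2$ and $\alpha\tfrac{\beta-1}{2}\int|u_n|^{\beta-3}|\nabla|u_n|^2|^2$ appearing on the left of \eqref{eqth02}. It then remains to dominate the cubic terms $\int(u_n\cdot\nabla u_n)\cdot\Delta u_n$ and their magnetic analogues. For the velocity convection I would use Cauchy--Schwarz to bound it by $\tfrac12\|\Delta u_n\|_{L^2}^2+\tfrac12\int|u_n|^2|\nabla u_n|^2$, send the first piece into the viscous dissipation, and then apply the pointwise Young inequality $|u_n|^2\le\tfrac{\alpha(\beta-1)}{2}|u_n|^{\beta-1}+c_{\alpha,\beta}$ — available precisely because $\beta>3$ makes the exponent $\beta-1>2$, with optimal constant the stated $c_{\alpha,\beta}$ — to absorb $\int|u_n|^2|\nabla u_n|^2$ into the damping dissipation at the cost of a residual $c_{\alpha,\beta}\|\nabla u_n\|_{L^2}^2$. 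This gives the differential inequality $\frac{d}{dt}\|\nabla w_n\|_{L^2}^2+[\text{dissipation}]\le 2c_{\alpha,\beta}\|\nabla w_n\|_{L^2}^2$; integrating it and inserting $\int_0^t\|\nabla w_n\|_{L^2}^2\le\tfrac12\|w^0\|_{L^2}^2$ yields \eqref{eqth02}, while discarding the dissipation and applying Gr\"onwall yields \eqref{eqth03}.

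The main obstacle I anticipate is that the magnetic field carries no damping, so when the $b$-equation is tested against $-\Delta b_n$ the cubic terms $\int|b_n||\nabla b_n||\Delta b_n|$ must be controlled by the viscous dissipation alone. In three dimensions these are supercritical: the best available bound is $\lesssim\|\nabla b_n\|_{L^2}^{3/2}\|\Delta b_n\|_{L^2}^{3/2}\le\e\|\Delta b_n\|_{L^2}^2+C\|\nabla b_n\|_{L^2}^6$, so the estimate closes only if $\|\nabla b_n\|_{L^2}$ stays small. This is exactly where the hypothesis $\|(u^0,b^0)\|_{H^1}\ll\epsilon_0$ enters, through a continuity/bootstrap argument: one assumes $\|w_n(t)\|_{H^1}\le 2\epsilon_0$ on a maximal subinterval, shows the inequalities above force $\|w_n(t)\|_{H^1}<2\epsilon_0$ strictly, and concludes the bound is never saturated. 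Once the uniform global bounds are secured, Aubin--Lions compactness furnishes a subsequence converging to a limit $w=(u,b)$; the quadratic terms pass to the limit by strong $L^2_{loc}$ convergence and the monotone damping term by almost-everywhere convergence combined with the uniform $L^{\beta+1}$ bound, producing a global solution in the stated spaces with $w\in C(\R^+,L^2)$ that satisfies \eqref{eqth01}--\eqref{eqth03}.
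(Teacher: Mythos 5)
Your proposal is correct and takes essentially the same route as the paper: the $L^{2}$ energy identity giving \eqref{eqth01}, the $\dot H^{1}$ estimate in which integrating the damping by parts produces the two sign-definite terms, the pointwise Young-type inequality $x^{2}\le 2c_{\alpha,\beta}+\alpha x^{\beta-1}$ (the paper's Lemma \ref{lem24}, available exactly because $\beta>3$) to absorb $\int|u|^{2}|\nabla u|^{2}$ into the damping dissipation at the cost of $c_{\alpha,\beta}\|\nabla u\|_{L^{2}}^{2}$, a smallness/continuity bootstrap to control the undamped magnetic cubic terms, Gr\"onwall for \eqref{eqth03}, and a Friedrichs-approximation/compactness passage to the limit. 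The only cosmetic difference is that you close the magnetic estimate via $\e\|\Delta b\|_{L^{2}}^{2}+C\|\nabla b\|_{L^{2}}^{6}$, whereas the paper interpolates once more to reach $C\|w\|_{H^{1}}\|\Delta w\|_{L^{2}}^{2}$ and absorbs it directly into the viscous term --- the same smallness mechanism in slightly different packaging.
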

	\begin{rem}
		To prove this theorem, we use Friederich's method, an interpolation of type $x^{2}\leq c_{\alpha,\beta} +\alpha x^{\beta-1}$ and some weak convergence results in Banach spaces.
	\end{rem}
	Despite this, the continuity and uniqueness of these modified equations remains a big open problem for $\beta=3.$ Indeed, the problem is restricted to the case $0<\alpha<\frac{1}{2}$ since the inequality
	\begin{align*}
			\frac{1}{2}\|\nabla w\|_{L^{2}}^{2}+\|\Delta w\|_{L^{2}}^{2}+\alpha \int_{\R^{3}}|u|^{\beta-1}|\nabla u|^2&\leq\frac{1}{2} \int_{\R^{3}} |u|^{2}|\nabla u|^{2}.	
	\end{align*}
 is not resolvable for these values of $\alpha$ by classical methods and techniques.\\
	The method we employ involves enhancing the function $|u|^2 u$ with a negligible function from a broad class of functions relative to $|u|^\varepsilon$ for $\varepsilon > 0$. This class comprises functions $f:\R^{+}\rightarrow\R^{+}, C^{1}$ that satisfy the hypothesis:
		$$(H)
	\begin{cases}
	\bullet f' >0\\
\bullet	f(0)=0\\
		    \bullet  \forall \beta>3, \exists a_{\beta},b_{\beta}>0 / a_{\beta} z^{2} \leq f(z)\leq b_{\beta} z^{\beta-1}, \;\;\;\forall z\geq1.
	
	\end{cases}
	$$ We consider a more general equation with a damping term, formulated as
	$$(MHD_{f})\label{sys2}
\begin{cases}
	\partial_t u
	-\Delta_{h} u-\partial_3^{2}u+ u\nabla u  + b\nabla b+\alpha f(|u|^{2}) |u|^{2}u =\;\;-\nabla p\hbox{ in } \mathbb R^+\times \mathbb R^3\\
	\partial_t b-\Delta_{h} b-\partial_3^{2}b+ b\nabla u  -u\cdotp\nabla b =\;\;0\\
	{\rm div}\, u = 0, {\rm div}\, b = 0\hbox{ in } \mathbb R^+\times \mathbb R^3\\
	u(0,x) =u^0(x), b(0,x) =b^0(x) \;\;\hbox{ in }\mathbb R^3.
\end{cases}
$$ 
Examples of such functions include $f(z)=\log(e+z)$, $\log(\log(e^{e}+z))$ or even $\log(\log(\log((e^{e})^{e}+z)))$.

Here, it is evident that when \( b^{0} = 0 \), the system reduces to the Navier-Stokes equations with logarithmic damping, where the function \( f(|u|^{2}) = \log(e + |u|^{2}) \) (see \cite{ML}). Furthermore, if \( b^{0} = 0 \) and \( \Delta_{h} = 0 \), the system also reduces to the anistropic Navier-Stokes equations with logarithmic damping (see \cite{MJ}). 
	The main result of our work is illustrated in the following theorem :
	
	\begin{theorem}\label{the2}Consider divergence-free vector fields $u^{0}$ and $b^{0}$ $\in H^{1}(\R^{3})$ such that $\|(u^{0},b^{0})\|_{H^{1}}<<\epsilon_{0},$ with $\epsilon_{0}$ is small enough. Then there exists a global solution $w=(u,b)$ of the magnetohydrodynamics equations $(MHD_{f})$ satisfying the following properties:
		$w=(u,b)\in L^{\infty}(\R^{+},L^{2}(\R^{3}))\cap C(\R^{+},H^{-1})\cap L^{2}(\R+,\dot{H}^{1}(\R^{3}))\cap L^{\beta+1}(\R^{+},L^{\beta+1}(\R^{3}))$and $$f(|u|^{2})|u|^4,\,f'(|u|^{2} |\nabla|u|^2|^2, f(|u|^{2}) |\nabla|u|^2|^2, f(|u|^{2})|u|^2 |\nabla u|^2\in L^1(\R^+,L^1(\R^3)).$$ Moreover, for all $t\geq0$ \begin{equation}\label{eq1}\|w(t)\|_{L^2}^2+2\int_0^t\|\nabla w\|_{L^2}^2+2\alpha\int_0^t\|f(|u|^2)|u|^4\|_{L^1}\leq \|w^0\|_{L^2}^2.\end{equation}
		\begin{align}\label{eq2}
			\nonumber\|\nabla w(t)\|_{L^2}^2+\int_0^t\|\Delta w\|_{L^2}^2+\alpha \int_{0}^{t}\| f'(|u|^{2}) |\nabla|u|^2|^2\|_{L^{1}}+\alpha\int_0^t\| f(|u|^{2}) |\nabla|u|^2|^2\|_{L^1} \\
			+2\alpha\int_0^t\|f(|u|^{2}) |u|^2|\nabla u|^2\|_{L^1}\leq \|\nabla w^0\|_{L^2}^2e^{a_{\alpha}t}\end{align}
		where, $a_{\alpha}=f^{-1}(\frac{1}{2\alpha})$ and $C$ is a constant that depends on the product law  of the Sobolev spaces.
	\end{theorem}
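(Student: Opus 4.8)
The plan is to construct $w=(u,b)$ by the Friedrichs (spectral truncation) scheme and then pass to the limit, following the architecture of Theorem \ref{the1} but with the pure power $\alpha|u|^{\beta-1}u$ replaced by the damping $\alpha f(|u|^2)|u|^2u$. First I would introduce the frequency cut-off $J_n$ with $\widehat{J_nv}(\xi)=\mathbf{1}_{\{|\xi|\le n\}}\hat v(\xi)$, replace $(MHD_f)$ by the system obtained by applying $J_n$ to every nonlinear term, and project onto divergence-free fields by the Leray projector to eliminate the pressure. This turns the system into an ODE on the band-limited space $\{v:\supp\hat v\subset B(0,n)\}$, on which all $L^p$ and Sobolev norms are equivalent; since $f\in C^1$, the map $v\mapsto f(|v|^2)|v|^2v$ is locally Lipschitz there, and Cauchy--Lipschitz yields a unique maximal approximate solution $w_n=(u_n,b_n)$.

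Next I would establish the two energy estimates uniformly in $n$. For \eqref{eq1}, I test the $u_n$-equation with $u_n$ and the $b_n$-equation with $b_n$: the transport terms and the Leray-projected pressure vanish by ${\rm div}\,u_n=0$, the MHD coupling terms cancel after one integration by parts, and the damping produces exactly $\alpha\int f(|u_n|^2)|u_n|^4$. Summing and integrating in time gives \eqref{eq1}, hence uniform control in $L^\I(L^2)\cap L^2(\dot H^1)$ and of $f(|u_n|^2)|u_n|^4$ in $L^1$. For \eqref{eq2} I test with $-\Delta u_n$ and $-\Delta b_n$; the decisive computation is the integration by parts of $-\alpha\langle f(|u_n|^2)|u_n|^2u_n,\Delta u_n\rangle$, which produces the three contributions displayed on the left of \eqref{eq2} coming from $g(s)=sf(s)$, $g'(s)=f(s)+sf'(s)$. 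Because $f'>0$ (hypothesis $(H)$), all three terms are nonnegative. The quadratic interactions are handled by the Sobolev product laws and an $\e$-absorption into $\|\Delta w_n\|_{L^2}^2$, after which the only genuinely dangerous remainder is $\tfrac12\int|u_n|^2|\nabla u_n|^2$ --- precisely the term that obstructs the classical method at $\be=3$.

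The main obstacle, and the step where $(H)$ is essential, is to dominate this remainder by the damping. I would split $\R^3=\{|u_n|^2\ge a_\al\}\cup\{|u_n|^2<a_\al\}$ with $a_\al=f^{-1}(\tfrac1{2\al})$, well defined since $f$ is strictly increasing. On the first set $\al f(|u_n|^2)\ge\tfrac12$, so $\tfrac12|u_n|^2|\nabla u_n|^2\le \al f(|u_n|^2)|u_n|^2|\nabla u_n|^2$ is absorbed into the damping term of \eqref{eq2}; on the second set $\tfrac12|u_n|^2|\nabla u_n|^2\le\tfrac{a_\al}{2}|\nabla u_n|^2\le\tfrac{a_\al}{2}\|\nabla w_n\|_{L^2}^2$. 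Discarding the remaining nonnegative higher-order terms leaves the differential inequality $\tfrac{d}{dt}\|\nabla w_n\|_{L^2}^2\le a_\al\|\nabla w_n\|_{L^2}^2$, and Gr\"onwall produces exactly the factor $e^{a_\al t}$ of \eqref{eq2}; reinstating the discarded terms yields the full estimate. The smallness of $\|(u^0,b^0)\|_{H^1}$ is what keeps the \emph{undamped} $b$-interactions subcritical in this step and forces the maximal solution to be global.

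Finally I would pass to the limit $n\to\I$. The uniform bounds give weak-$\ast$ limits in every energy space, and since $\p_t w_n$ is bounded in a negative-order space, Aubin--Lions compactness yields $w_n\to w$ strongly in $L^2_{loc}$ and almost everywhere, which identifies the transport and coupling limits. The delicate point is the damping nonlinearity: from the a.e. convergence $u_n\to u$, the continuity of $f$, and the uniform $L^1$ bound on $f(|u_n|^2)|u_n|^4$, the de la Vall\'ee-Poussin and Vitali equi-integrability criterion gives $f(|u_n|^2)|u_n|^2u_n\to f(|u|^2)|u|^2u$ in $L^1_{loc}$, which suffices to pass to the limit in the distributional formulation. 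Weak lower semicontinuity of the norms transfers \eqref{eq1}--\eqref{eq2} to $w$, and $\p_t w\in L^2_{loc}(H^{-1})$ gives the continuity statement $w\in C(\R^+,H^{-1})$, completing the proof.
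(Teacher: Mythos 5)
Your proposal is correct and follows essentially the same route as the paper: Friedrichs truncation, the $L^2$ and $\dot H^1$ energy estimates, the identical level-set splitting of $\R^3$ along $\{\alpha f(|u|^2)\ge \tfrac12\}$ with threshold $a_\alpha=f^{-1}\big(\tfrac{1}{2\alpha}\big)$ to absorb the critical remainder $\tfrac12\int_{\R^3}|u|^2|\nabla u|^2$, and a Gr\"onwall argument producing the factor $e^{a_\alpha t}$. Your passage to the limit (Aubin--Lions plus Vitali equi-integrability for the damping term) is in fact spelled out in more detail than the paper's one-line appeal to Ascoli and the Cantor diagonal process; just make explicit that closing the smallness bootstrap globally in time uses the time-integrated dissipation $\int_0^\infty\|\nabla w\|_{L^2}^2\le\tfrac12\|w^0\|_{L^2}^2$ from the first energy inequality, as the exponentially growing bound alone would not keep $\|w(t)\|_{H^1}$ below the bootstrap threshold.
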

The remainder of our paper is organized as follows. In the second section, we present the notations, definitions, and preliminary results. In Section 3, we examine the global existence and uniqueness of solutions, as established in Theorem \ref{the1} and Theorem \ref{the2}. 
	\section{\bf Notations and preliminary results}
	\subsection{Notations} This section contains some notations and definitions that will be useful later.\\
	\begin{enumerate}
		\item[$\bullet$] The Fourier transformation is normalized as
		$$
		\mathcal{F}(f)(\xi)=\widehat{f}(\xi)=\int_{\mathbb R^3}\exp(-ix.\xi)f(x)dx,\,\,\,\xi=(\xi_1,\xi_2,\xi_3)\in\mathbb R^3.
		$$
		\item[$\bullet$] The inverse Fourier formula is
		$$
		\mathcal{F}^{-1}(g)(x)=(2\pi)^{-3}\int_{\mathbb R^3}\exp(i\xi.x)g(\xi)d\xi,\,\,\,x=(x_1,x_2,x_3)\in\mathbb R^3.
		$$
		\item[$\bullet$] The convolution product of a suitable pair of function $f$ and $g$ on $\mathbb R^3$ is given by
		$$
		(f\ast g)(x):=\int_{\mathbb R^3}f(y)g(x-y)dy.
		$$
		\item[$\bullet$] If $f=(f_1,f_2,f_3)$ and $g=(g_1,g_2,g_3)$ are two vector fields, we set
		$$
		f\otimes g:=(g_1f,g_2f,g_3f),
		$$
		and
		$$
		{\rm div}\,(f\otimes g):=({\rm div}\,(g_1f),{\rm div}\,(g_2f),{\rm div}\,(g_3f)).
		$$
		Moreover, if $\rm{div}\,g=0$ we obtain
		$$
		{\rm div}\,(f\otimes g):=g_1\partial_1f+g_2\partial_2f+g_3\partial_3f:=g.\nabla f.
		$$
		\item[$\bullet$] Let $(B,||.||)$, be a Banach space, $1\leq p \leq\infty$ and  $T>0$. We define $L^p_T(B)$ the space of all
		measurable functions $[0,t]\ni t\mapsto f(t) \in B$ such that $t\mapsto||f(t)||\in L^p([0,T])$.\\
		\item[$\bullet$] The Sobolev space $H^s(\mathbb R^3)=\{f\in \mathcal S'(\mathbb R^3);\;(1+|\xi|^2)^{s/2}\widehat{f}\in L^2(\mathbb R^3)\}$.\\
		\item[$\bullet$] The homogeneous Sobolev space $\dot H^s(\mathbb R^3)=\{f\in \mathcal S'(\mathbb R^3);\;\widehat{f}\in L^1_{loc}\;{\rm and}\;|\xi|^s\widehat{f}\in L^2(\mathbb R^3)\}$.\\
		\item[$\bullet$] For $R>0$, the Friedritch operator $J_R$ is defined by
		$$J_R(D)f=\mathcal F^{-1}({\bf 1}_{|\xi|<R}\widehat{f}).$$
		\item[$\bullet$] The Leray projector $\mathbb P:(L^2(\R^3))^3\rightarrow (L^2(\R^3))^3$ is defined by
		$$\mathcal F(\mathbb P f)=\widehat{f}(\xi)-(\widehat{f}(\xi).\frac{\xi}{|\xi|})\frac{\xi}{|\xi|}=M(\xi)\widehat{f}(\xi);\;M(\xi)=(\delta_{k,l}-\frac{\xi_k\xi_l}{|\xi|^2})_{1\leq k,l\leq 3}.$$
		\item[$\bullet$] $L^2_\sigma(\R^3)=\{f\in (L^2(\R^3))^3;\;{\rm div}\,f=0\}$.
		\item[$\bullet$] $\dot H^1_\sigma(\R^3)=\{f\in (\dot H^1(\R^3))^3;\;{\rm div}\,f=0\}$.
		\item[$\bullet$] $C_{r}(I,B)=\{f:I\rightarrow B\mbox{ right continuous }\}$ , where $B$ is Banach space and $I$ is an interval.
		\item[$\bullet$] Let $a\in\R,$ we define $a_{+}=\max(a,0)$.
	\end{enumerate}
	\subsection{Preliminary results}
	In this section, we recall some classical results and we give new technical lemmas.
	\begin{prop}(\cite{HBAF})\label{prop1} Let $H$ be Hilbert space.
		\begin{enumerate}
			\item If $(x_n)$ is a bounded sequence of elements in $H$, then there is a subsequence $(x_{\varphi(n)})$ such that
			$$(x_{\varphi(n)}|y)\rightarrow (x|y),\;\forall y\in H.$$
			\item If $x\in H$ and $(x_n)$ is a bounded sequence of elements in $H$ such that
			$$(x_n|y)\rightarrow (x|y),\;\forall y\in H.$$
			Then $\|x\|\leq\liminf_{n\rightarrow\infty}\|x_n\|.$
			\item If $x\in H$ and $(x_n)$ is a bounded sequence of elements in $H$ such that
			$$\begin{array}{l}
				(x_n|y)\rightarrow (x|y),\;\forall y\in H\\
				\limsup_{n\rightarrow\infty}\|x_n\|\leq \|x\|,\end{array}$$
			then $\lim_{n\rightarrow\infty}\|x_n-x\|=0.$
		\end{enumerate}
	\end{prop}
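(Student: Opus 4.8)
The plan is to establish the three items in order, treating them as the classical statements they are: weak sequential compactness of bounded sets, weak lower semicontinuity of the norm, and the Radon--Riesz property. Only the first item carries genuine content; the remaining two reduce to one-line computations once the appropriate inner products are expanded, so I would spend the bulk of the argument on item (1) and dispatch (2) and (3) quickly.

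For item (1) I would first reduce to the separable setting by replacing $H$ with the closed linear span $H_0:=\overline{\operatorname{span}}\{x_n : n\in\N\}$, which is a separable Hilbert space containing every $x_n$; note that for any $y\in H$, writing $y=y_0+y_1$ with $y_0\in H_0$ and $y_1\perp H_0$ gives $(x_n|y)=(x_n|y_0)$, so it suffices to control pairings against $H_0$. Fixing a countable dense sequence $(e_k)_k$ in $H_0$ and setting $M:=\sup_n\|x_n\|<\I$, each scalar sequence $\big((x_n|e_k)\big)_n$ is bounded by $M\|e_k\|$ by Cauchy--Schwarz, so a Cantor diagonal extraction produces a single subsequence $(x_{\fy(n)})$ along which $(x_{\fy(n)}|e_k)$ converges for every $k$. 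The key step is to promote this to convergence against every $y\in H$: given $\e>0$, approximating $y_0$ by some $e_k$ within $\e$ and splitting $|(x_{\fy(n)}-x_{\fy(m)}|y_0)|$ through the triangle inequality shows that $\big((x_{\fy(n)}|y)\big)_n$ is Cauchy, hence convergent. Its limit defines a linear functional $L(y):=\lim_n (x_{\fy(n)}|y)$ with $|L(y)|\le M\|y\|$, so the Riesz representation theorem furnishes a unique $x\in H$ with $L(y)=(x|y)$, which is precisely the claimed weak limit.

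For item (2), I would test the weak convergence against $y=x$: this gives $(x_n|x)\to\|x\|^2$, while Cauchy--Schwarz yields $(x_n|x)\le\|x_n\|\,\|x\|$; taking liminf and dividing by $\|x\|$ (the case $x=0$ being trivial) produces $\|x\|\le\liminf_n\|x_n\|$. For item (3), I would expand $\|x_n-x\|^2=\|x_n\|^2-2\,\mathrm{Re}(x_n|x)+\|x\|^2$, observe that weak convergence forces $\mathrm{Re}(x_n|x)\to\|x\|^2$, and conclude $\limsup_n\|x_n-x\|^2\le\limsup_n\|x_n\|^2-\|x\|^2\le 0$ using the hypothesis $\limsup_n\|x_n\|\le\|x\|$, whence $\|x_n-x\|\to0$.

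The only real obstacle is the bookkeeping in item (1): the passage from a separable reduction, through a diagonal extraction, to convergence against every test vector, and the verification that the resulting limit functional is bounded so that Riesz applies. Once that machinery is in place, items (2) and (3) follow immediately from the inner-product expansions above.
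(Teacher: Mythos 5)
Your proof is correct: the separable reduction $H_0=\overline{\operatorname{span}}\{x_n\}$ with the orthogonal splitting of test vectors, the Cantor diagonal extraction against a dense sequence, and the Riesz representation of the limit functional settle item (1), and the inner-product expansions you give for items (2) and (3) are the standard one-line arguments and are carried out without gaps. Note, however, that the paper contains no proof of this proposition at all---it is quoted as a known result from Brezis \cite{HBAF}---so there is no in-paper argument to compare against; yours is the classical textbook proof that the citation points to. The only cosmetic remark: over complex scalars the functional $y\mapsto\lim_n(x_{\varphi(n)}|y)$ may be conjugate-linear rather than linear depending on the inner-product convention, but the Riesz theorem applies in either form, so nothing in your argument breaks.
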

	\begin{lem}(\cite{JYC})\label{LP}
		Let $s_1,\ s_2$ be two real numbers and $d\in\N$.
		\begin{enumerate}
			\item If $s_1<d/2$\; and\; $s_1+s_2>0$, there exists a constant  $C_1=C_1(d,s_1,s_2)$, such that: if $f,g\in \dot{H}^{s_1}(\mathbb{R}^d)\cap \dot{H}^{s_2}(\mathbb{R}^d)$, then $f.g \in \dot{H}^{s_1+s_2-1}(\mathbb{R}^d)$ and
			$$\|fg\|_{\dot{H}^{s_1+s_2-\frac{d}{2}}}\leq C_1 (\|f\|_{\dot{H}^{s_1}}\|g\|_{\dot{H}^{s_2}}+\|f\|_{\dot{H}^{s_2}}\|g\|_{\dot{H}^{s_1}}).$$
			\item If $s_1,s_2<d/2$\; and\; $s_1+s_2>0$ there exists a constant $C_2=C_2(d,s_1,s_2)$ such that: if $f \in \dot{H}^{s_1}(\mathbb{R}^d)$\; and\; $g\in\dot{H}^{s_2}(\mathbb{R}^d)$, then  $f.g \in \dot{H}^{s_1+s_2-1}(\mathbb{R}^d)$ and
			$$\|fg\|_{\dot{H}^{s_1+s_2-\frac{d}{2}}}\leq C_2 \|f\|_{\dot{H}^{s_1}}\|g\|_{\dot{H}^{s_2}}.$$
		\end{enumerate}
	\end{lem}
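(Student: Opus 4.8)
This is the classical Sobolev product law; note first that the target index on the left should read $\dot{H}^{s_1+s_2-d/2}$, in agreement with the norm appearing on the right-hand side (the exponent $s_1+s_2-1$ is a typo, correct only when $d=2$). The plan is to prove it by Littlewood--Paley analysis combined with Bony's paraproduct decomposition. Writing $\De_j$ for the dyadic blocks, $\ti\De_j=\De_{j-1}+\De_j+\De_{j+1}$, and $S_{j-1}=\sum_{k\le j-2}\De_k$ for the low-frequency truncation, I would split
$$ fg = T_f g + T_g f + R(f,g), \qquad T_f g=\sum_j S_{j-1}f\,\De_j g, \quad R(f,g)=\sum_j \De_j f\,\ti\De_j g. $$
The gain of this decomposition is that the two paraproducts are spectrally supported in dyadic annuli $|\xi|\sim 2^j$, so their $\dot{H}^s$ norm is equivalent, for every real $s$, to the $\ell^2$ norm of the sequence $(2^{js}\|\cdot\|_{L^2})$ by almost orthogonality, whereas the remainder is supported in balls $|\xi|\lec 2^j$ and must be summed through the positivity condition $s_1+s_2>0$.

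For the paraproduct $T_g f$ I would bound the low-frequency factor in $L^\infty$ by Bernstein, $\|S_{j-1}g\|_{L^\infty}\lec 2^{j(d/2-s_2)}\|g\|_{\dot{H}^{s_2}}$ --- this is precisely the step that forces $s_2<d/2$ --- and pair it with $\|\De_j f\|_{L^2}=c_j 2^{-js_1}\|f\|_{\dot{H}^{s_1}}$, where $(c_j)$ is a unit-norm sequence in $\ell^2$. Multiplying by the weight $2^{j(s_1+s_2-d/2)}$ and summing the annular pieces gives $\|T_g f\|_{\dot{H}^{s_1+s_2-d/2}}\lec \|f\|_{\dot{H}^{s_1}}\|g\|_{\dot{H}^{s_2}}$; the term $T_f g$ is symmetric and needs $s_1<d/2$. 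For the remainder the crucial point is to avoid a lossy estimate when the target index $s=s_1+s_2-d/2$ is negative: I would first use $\|\De_j f\,\ti\De_j g\|_{L^1}\le \|\De_j f\|_{L^2}\|\ti\De_j g\|_{L^2}\lec c_jd_j 2^{-j(s_1+s_2)}\|f\|_{\dot{H}^{s_1}}\|g\|_{\dot{H}^{s_2}}$ (with $(c_j),(d_j)$ unit vectors in $\ell^2$), and then recover $L^2$ at the \emph{output} frequency through $\|\De_k R(f,g)\|_{L^2}\lec 2^{kd/2}\sum_{j\ge k-N_0}\|\De_j f\,\ti\De_j g\|_{L^1}$. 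The exponents then combine into a convolution kernel $2^{(k-j)(s_1+s_2)}\mathbf 1_{k-j\le N_0}$, which lies in $\ell^1$ exactly because $s_1+s_2>0$; Young's inequality against $(c_jd_j)\in\ell^2$ yields $\|R(f,g)\|_{\dot{H}^{s_1+s_2-d/2}}\lec \|f\|_{\dot{H}^{s_1}}\|g\|_{\dot{H}^{s_2}}$. Collecting the three bounds proves assertion (2), where both $s_1,s_2<d/2$ are at our disposal.

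Assertion (1) is the delicate one and the main obstacle, because there only $s_1<d/2$ is assumed whereas the Bernstein step above demands that the factor sent to $L^\infty$ carry an index strictly below $d/2$, which $s_2$ need not satisfy. The remedy exploits the standing hypothesis that both functions lie in $\dot{H}^{s_1}\cap\dot{H}^{s_2}$: in every paraproduct I would assign the low-frequency (hence $L^\infty$) factor to the space $\dot{H}^{s_1}$, where $s_1<d/2$ is guaranteed, and the high-frequency factor to $\dot{H}^{s_2}$. Applied to $T_f g$ this uses $f\in\dot{H}^{s_1}$, $g\in\dot{H}^{s_2}$ and produces $\|f\|_{\dot{H}^{s_1}}\|g\|_{\dot{H}^{s_2}}$, while applied to $T_g f$ it uses $g\in\dot{H}^{s_1}$, $f\in\dot{H}^{s_2}$ and produces $\|f\|_{\dot{H}^{s_2}}\|g\|_{\dot{H}^{s_1}}$; the remainder, needing no $L^\infty$ control, may be bounded either way. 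Summing gives exactly the symmetric right-hand side $\|f\|_{\dot{H}^{s_1}}\|g\|_{\dot{H}^{s_2}}+\|f\|_{\dot{H}^{s_2}}\|g\|_{\dot{H}^{s_1}}$. The only routine points remaining are tracking the Bernstein constants, which fixes $C_1$ and $C_2$ as explicit functions of $d,s_1,s_2$.
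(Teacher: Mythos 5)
Your proof is correct, but note that there is no proof in the paper to compare it against: the lemma is quoted as a known product law (indeed the citation key for it does not even match any entry of the paper's bibliography; the intended source is Chemin's product law, proved for instance in Bahouri--Chemin--Danchin \cite{HB} by exactly the argument you give). Your three steps --- Bony's decomposition $fg=T_fg+T_gf+R(f,g)$, Bernstein on the low-frequency factor (which is precisely what forces the index of that factor strictly below $d/2$), and the remainder summed through the convolution kernel $2^{(k-j)(s_1+s_2)}\mathbf{1}_{k-j\le N_0}$, which is in $\ell^1$ exactly when $s_1+s_2>0$ --- constitute the standard proof, and your key observation for assertion (1), namely that since $f,g\in\dot{H}^{s_1}\cap\dot{H}^{s_2}$ one may always route the low-frequency factor through $\dot{H}^{s_1}$ (where $s_1<d/2$ is guaranteed) and the high-frequency factor through $\dot{H}^{s_2}$, is exactly what produces the symmetric right-hand side in (1), while (2) follows with the single product $\|f\|_{\dot{H}^{s_1}}\|g\|_{\dot{H}^{s_2}}$ because both indices sit below $d/2$. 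You are also right that the exponent $s_1+s_2-1$ in the statement of the lemma is a typo for $s_1+s_2-\frac{d}{2}$, consistent with the displayed norms. One routine caveat you could make explicit: when the output index $s_1+s_2-\frac{d}{2}$ is not below $\frac{d}{2}$, the dyadic characterization of $\dot{H}^s$ that you invoke ``for every real $s$'' holds only modulo polynomials, so the conclusion should be read in that sense (or under the extra restriction $s_1+s_2\le d$); this is a presentational point, not a gap, and it is immaterial for the paper's application, where $d=3$, $s_1=1$, $s_2=\frac{1}{2}$.
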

	
	\begin{lem}\cite{ML}\label{lem45}
		Let $A,T>0$ and $f,g,h:[0,T]\rightarrow\R^+$ three continuous functions such that
		\begin{align}\label{e1}\;\;\;\;\forall t\in[0,T];\;f(t)+\int_0^tg(z)dz & \leq A+\int_0^th(z)f(z)dz.\end{align}
		Then $$\forall t\in[0,T];\;f(t)+\int_0^tg(z)dz\leq A\exp(\int_0^th(z)dz).$$
	\end{lem}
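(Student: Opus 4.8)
The plan is to reduce this ``integrated'' Grönwall inequality to the classical differential form by absorbing the nonnegative dissipation term $\int_0^t g$ into a single auxiliary quantity. First I would set $F(t) := f(t) + \int_0^t g(z)\,dz$, which is continuous on $[0,T]$, and rewrite the hypothesis \eqref{e1} as $F(t) \le A + \int_0^t h(z) f(z)\,dz$. The key observation is that, because $g \ge 0$, we have $f(z) = F(z) - \int_0^z g(s)\,ds \le F(z)$ for every $z \in [0,T]$, and since moreover $h \ge 0$ this yields the pointwise bound $h(z) f(z) \le h(z) F(z)$. Substituting into the previous line gives the self-referential estimate $F(t) \le A + \int_0^t h(z) F(z)\,dz$, which is exactly the classical Grönwall hypothesis, now written for the combined quantity $F$ rather than for $f$ alone.

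The second step is the classical integrating-factor argument applied to $F$. I would introduce the majorant $\psi(t) := A + \int_0^t h(z) F(z)\,dz$, so that $F(t) \le \psi(t)$ on $[0,T]$ and, by continuity of $h$ and $F$, the function $\psi$ is $C^1$ with $\psi'(t) = h(t) F(t) \le h(t)\,\psi(t)$. Multiplying by the strictly positive integrating factor $\exp\!\big(-\int_0^t h(z)\,dz\big)$ and using $\psi'(t) - h(t)\psi(t) \le 0$, one checks that $\frac{d}{dt}\big(\psi(t)\exp(-\int_0^t h(z)\,dz)\big) = \big(\psi'(t) - h(t)\psi(t)\big)\exp(-\int_0^t h(z)\,dz) \le 0$, so $t \mapsto \psi(t)\exp(-\int_0^t h(z)\,dz)$ is nonincreasing on $[0,T]$. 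Hence it is bounded above by its value at $t=0$, namely $\psi(0) = A$, which gives $\psi(t) \le A\exp\!\big(\int_0^t h(z)\,dz\big)$. Combining this with $F(t) \le \psi(t)$ and unravelling the definition of $F$ yields $f(t) + \int_0^t g(z)\,dz \le A\exp\!\big(\int_0^t h(z)\,dz\big)$, which is the claim.

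There is no deep obstacle here; the statement is a sharpened Grönwall lemma and the only point genuinely requiring care is the monotone-substitution step, where the sign conditions $g \ge 0$ and $h \ge 0$ are both used to replace $f$ by $F$ inside the integral. This is precisely the mechanism that lets the extra term $\int_0^t g$ be carried along for free on the left-hand side without weakening the exponential bound, and it is the feature that makes the lemma directly applicable to energy estimates of the form \eqref{eq2}, where $g$ collects the dissipation and damping contributions. Continuity of $f,g,h$ guarantees that all integrals are well defined and that $\psi$ is differentiable, so the integrating-factor computation is legitimate throughout $[0,T]$.
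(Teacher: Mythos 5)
Your proof is correct: setting $F(t):=f(t)+\int_0^t g(z)\,dz$, using $g\ge 0$ and $h\ge 0$ to replace $h(z)f(z)$ by $h(z)F(z)$ inside the integral, and then running the classical integrating-factor argument on the majorant $\psi$ is exactly the standard mechanism behind this sharpened Gr\"onwall lemma. The paper states the lemma without proof, citing \cite{ML}, and your argument coincides with the standard proof given there, so no further comment is needed.
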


	\begin{lem}\cite{ML}\label{lem24}
		Let $\alpha>0$, $\beta>3$ and $x\in\R_{+}$ then
		$$x^{2}\leq 2c_{\alpha,\beta} +\alpha x^{\beta-1},$$
		with $c_{\alpha,\beta}=\frac{1}{2}\frac{\beta-3}{\beta-1}.\Big(\frac{\alpha(\beta-1)}{2}\Big)^{-\frac{2}{\beta-3}}$
	\end{lem}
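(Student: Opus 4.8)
The inequality is equivalent to showing that the function $h(x):=x^2-\alpha x^{\beta-1}$ is bounded above by $2c_{\alpha,\beta}$ on $\R_{+}$; the plan is to locate the global maximum of $h$ by elementary calculus and to check that its value is \emph{exactly} $2c_{\alpha,\beta}$, so that the stated bound is sharp and the constant $c_{\alpha,\beta}$ is optimal. This also explains where the peculiar form of $c_{\alpha,\beta}$ comes from.

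First I would record that $h$ is continuous on $[0,\infty)$ with $h(0)=0$ and, since $\beta-1>2$, $h(x)\to-\infty$ as $x\to\infty$; hence $h$ attains a global maximum at some interior point $x_*>0$. Differentiating gives $h'(x)=2x-\alpha(\beta-1)x^{\beta-2}$, and setting $h'(x)=0$ for $x>0$ (dividing through by $x$) yields the single critical value
$$x_*^{\beta-3}=\frac{2}{\alpha(\beta-1)},\qquad\text{that is}\qquad x_*=\Big(\frac{\alpha(\beta-1)}{2}\Big)^{-\frac{1}{\beta-3}}.$$
Since $h'$ is positive for small $x>0$ and negative for large $x$, this unique stationary point is the global maximum.

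Next I would substitute $x_*$ back into $h$. Using $x_*^{\beta-3}=\tfrac{2}{\alpha(\beta-1)}$ one has $\alpha x_*^{\beta-1}=\alpha x_*^{\beta-3}x_*^{2}=\tfrac{2}{\beta-1}x_*^{2}$, whence
$$h(x_*)=x_*^{2}-\frac{2}{\beta-1}x_*^{2}=\frac{\beta-3}{\beta-1}\,x_*^{2}=\frac{\beta-3}{\beta-1}\Big(\frac{\alpha(\beta-1)}{2}\Big)^{-\frac{2}{\beta-3}}.$$
This is precisely $2c_{\alpha,\beta}$, so $x^{2}-\alpha x^{\beta-1}=h(x)\le h(x_*)=2c_{\alpha,\beta}$ for every $x\ge0$, which rearranges to the claim.

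The computation is entirely routine and I do not anticipate a genuine obstacle; the only points needing a little care are the bookkeeping of the fractional exponents and confirming that the unique stationary point is a maximum rather than a minimum (settled by the sign analysis of $h'$, or by $h''(x_*)<0$). As a shorter alternative one may bypass the optimization and apply a scaled Young inequality with conjugate exponents $p=\frac{\beta-1}{2}$ and $q=\frac{\beta-1}{\beta-3}$ to the product $x^{2}=x^{2}\cdot 1$, choosing the scaling parameter so that the $x^{\beta-1}$ term carries the coefficient $\alpha$; this reproduces the constant $2c_{\alpha,\beta}$ directly.
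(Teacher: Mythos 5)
Your proof is correct, and since the paper states this lemma without proof (it is imported from \cite{ML}), there is no in-paper argument for you to diverge from; your elementary optimization is the natural one, and the computation checks out exactly: the unique critical point $x_{*}=\big(\tfrac{\alpha(\beta-1)}{2}\big)^{-\frac{1}{\beta-3}}$ of $h(x)=x^{2}-\alpha x^{\beta-1}$ gives $h(x_{*})=\tfrac{\beta-3}{\beta-1}\big(\tfrac{\alpha(\beta-1)}{2}\big)^{-\frac{2}{\beta-3}}=2c_{\alpha,\beta}$, so the constant is sharp. Your Young-inequality alternative is also valid — with $p=\tfrac{\beta-1}{2}$ and $q=\tfrac{\beta-1}{\beta-3}$ one indeed has $\tfrac{1}{p}+\tfrac{1}{q}=1$ and the scaled form reproduces $2c_{\alpha,\beta}$ exactly — and both routes make transparent why $\beta>3$ is indispensable (at $\beta=3$ the exponents degenerate and $h(x)=(1-\alpha)x^{2}$ is unbounded for $\alpha<1$), which is precisely the obstruction the paper invokes when restricting the case $\beta=3$.
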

	
	\begin{lem}\label{lem46}
		Let $d\in\N$. Then, for all $x,y\in\R^d$, we have
		$$\langle f(|x|^{2})|x|^2x-f(|y|^{2})|y|^2y,x-y\rangle\geq0$$	
	\end{lem}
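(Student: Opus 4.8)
The plan is to reduce this vector inequality to a one-dimensional monotonicity statement, exploiting the fact that the nonlinearity is \emph{radial}. Writing $r=|x|$ and setting $h(r):=f(r^{2})\,r^{2}$, the map appearing in the lemma is $G(x):=f(|x|^{2})|x|^{2}x=h(|x|)\,x$. Since $f(0)=0$ and $f'>0$, the function $f$ is nonnegative and nondecreasing on $[0,\infty)$, so $h\geq0$. This positivity of the radial factor is what will let me control the only term of indefinite sign.

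Next I would expand the inner product directly:
\begin{align*}
\langle h(|x|)x-h(|y|)y,\,x-y\rangle
= h(|x|)|x|^{2}+h(|y|)|y|^{2}-\big(h(|x|)+h(|y|)\big)\langle x,y\rangle .
\end{align*}
The first two terms are nonnegative; the cross term is the only problematic one. Because its coefficient $h(|x|)+h(|y|)$ is nonnegative, the Cauchy--Schwarz inequality $\langle x,y\rangle\leq|x||y|$ yields a genuine \emph{lower} bound in which $\langle x,y\rangle$ is replaced by $|x||y|$.

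Then I would put $a=|x|$, $b=|y|$ and simplify the resulting scalar lower bound, which factors cleanly as
\begin{align*}
h(a)a^{2}+h(b)b^{2}-\big(h(a)+h(b)\big)ab=(a-b)\big(a\,h(a)-b\,h(b)\big).
\end{align*}
Introducing $H(r):=r\,h(r)=f(r^{2})\,r^{3}$, this is exactly $(a-b)\big(H(a)-H(b)\big)$. Since $f\geq0$ is nondecreasing and $r\mapsto r^{3}$ is increasing, their product $H$ is nondecreasing, so $(a-b)\big(H(a)-H(b)\big)\geq0$ for all $a,b\geq0$, which closes the argument.

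The step I expect to be the crux is the treatment of the cross term: a careless estimate can destroy the sign, and the whole proof hinges on noticing that after Cauchy--Schwarz the inequality collapses to the monotonicity of the single scalar function $H(r)=f(r^{2})r^{3}$. It is worth remarking that only the two conditions $f(0)=0$ and $f'>0$ from $(H)$ are used here; the growth bounds $a_{\beta}z^{2}\leq f(z)\leq b_{\beta}z^{\beta-1}$ play no role in this pointwise monotonicity and are reserved for the energy estimates.
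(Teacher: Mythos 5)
Your proof is correct. The paper argues differently: setting $a(z)=f(z^{2})z^{2}$ and assuming without loss of generality $|x|\geq|y|$, it uses the asymmetric decomposition
\begin{align*}
\langle a(|x|)x-a(|y|)y,\,x-y\rangle=\big(a(|x|)-a(|y|)\big)\langle x,\,x-y\rangle+a(|y|)\,|x-y|^{2},
\end{align*}
where the second term is nonnegative because $f\geq0$ (from $f(0)=0$, $f'>0$), and the first is nonnegative because $a$ is nondecreasing and $\langle x,\,x-y\rangle\geq|x|^{2}-|x||y|=|x|(|x|-|y|)\geq0$ by Cauchy--Schwarz. Your route — full symmetric expansion, Cauchy--Schwarz applied to the cross term (legitimate precisely because its coefficient $h(|x|)+h(|y|)$ is nonnegative, a point you rightly flag), then the factorization $h(a)a^{2}+h(b)b^{2}-(h(a)+h(b))ab=(a-b)\big(H(a)-H(b)\big)$ with $H(r)=f(r^{2})r^{3}$ — reaches the same conclusion without the WLOG case split, and it isolates a cleaner sufficient condition: the lemma holds for any radial map $G(x)=h(|x|)x$ with $h\geq0$ and $r\mapsto r\,h(r)$ nondecreasing, which is slightly weaker than requiring $h$ itself nondecreasing as the paper's argument implicitly does. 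Both proofs ultimately rest on Cauchy--Schwarz plus one-dimensional monotonicity, and you correctly observe that only $f(0)=0$ and $f'>0$ from hypothesis $(H)$ are used, the growth bounds playing no role. The only step worth spelling out in your write-up is the justification that $H$ is nondecreasing, namely that a product of two nonnegative nondecreasing functions ($r\mapsto f(r^{2})$ and $r\mapsto r^{3}$ on $[0,\infty)$) is nondecreasing.
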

	\begin{proof}
		The proof is a generalization of the lemma in \cite{ML}.\\
		Let $a(z)=f(z^2)z^2$ and suppose that $|x|\geq|y|$ :
		\begin{align*}
			\langle a(|x|)x-a(|y|)y,x-y\rangle=&\langle(a(|x|)-a(|y|))x,x-y\rangle+a(|y|)\langle x-y,x-y\rangle\\=&(a(|x|)-a(|y|))\langle x,x-y\rangle+a(|y|)|x-y|^{2}.
		\end{align*}	
	$f$  is a strictly increasing positive function and by using $|x|\geq|y|$  , we get
		$$\langle x,x-y\rangle=|x|^2-\langle x,y\rangle\geq|x|^2-|x||y|=|x|(|x|-|y|)\geq0,$$
		this yields the desired result.	
	\end{proof}

	\section{\bf Existence and uniqueness of strong solution }
	\subsection{Proof of Theorem $\ref{the1}$}
	$\bullet$ {\bf{A priori estimates:}} We initiate our analysis by seeking an $L^2(\R^{3})$ uniform estimate for the velocity. To achieve this, we begin by multiplying the first equation of the $(MHD_{D})$ system by $w$ and integrating over $\mathbb{R}^{3}$. Subsequently, we integrate with respect to time, resulting in the inequality: 
	\begin{align}\label{eq01}\|w(t)\|_{L^2}^2+2\int_0^t\|\nabla w\|_{L^2}^2+2\alpha\int_0^t \|u\|_{L^{\beta+1}}^{\beta+1} \leq \|w^0\|_{L^2}^2.\end{align}
	Furthermore, we proceed by taking the $\dot{H}^{1}(\R^{3})$ scalar product with $w$, leading to:
	\begin{align}\label{eqh1}
		\frac{1}{2}\frac{d}{dt}\|\nabla w\|_{L^{2}}^{2}+\|\Delta w\|^2_{L^2}+ \int_{\R^3} \nabla(|u|^{\beta-1}u) \nabla u &\leq \sum_{i=1}^{4} |I_{k}| .
	\end{align}
where $$ |I_{1}|=| \langle \nabla (u \nabla u),\nabla u\rangle_{L^{2}}|, |I_{2}|=| \langle \nabla (b \nabla b),\nabla u\rangle_{L^{2}}|, I_{3}|=| \langle \nabla (b \nabla u),\nabla b\rangle_{L^{2}}| \;\textit{and}\; I_{4}|=| \langle \nabla (u \nabla b),\nabla b\rangle_{L^{2}}|$$
	Utilizing the identity
	$$\partial_j\Big(|u|^{\beta-1}u\Big)\partial_ju=|u|^{\beta-1}|\partial_ju|^2+\frac{(\beta-1)}{4}|u|^{\beta-3}|\partial_j|u|^2|^2$$
	yields
	$$\int_{\R^3} \nabla(|u|^{\beta-1}u) \nabla u=\||u|^{\beta-1}|\nabla u|^2\|_{L^1}+\frac{(\beta-1)}{4}\||u|^{\beta-3}|\nabla|u|^2|^2\|_{L^1}.$$
Furthermore, due to ${\rm div} (u) = 0$ and ${\rm div} (b) = 0$, we have
	$$|I_{1}|=|\langle \nabla (u \nabla u),\nabla u\rangle_{L^{2}}|=|\langle u \nabla u,\Delta  u\rangle_{L^{2}}|$$
$$|I_{2}|=|\langle \nabla (b\nabla b),\nabla u \rangle_{L^{2}}|=|\langle  b\nabla b, \Delta u\rangle_{L^{2}}|$$
$$|I_{3}|=|\langle \nabla (b\nabla u),\nabla b\rangle_{L^{2}}|=|\langle b\nabla u,\Delta b\rangle_{L^{2}}|$$
and $$|I_{4}|=|\langle \nabla (u\nabla b),\nabla b\rangle_{L^{2}}|=|\langle \nabla u\nabla b,\Delta b\rangle_{L^{2}}|$$
Thus,
\begin{align*}
	|I_{1}|&\leq \|u\nabla u\|_{L^{2}}\|\Delta u\|_{L^{2}}\\&\leq \frac{1}{2}\|u\nabla u\|^{2}_{L^{2}}+\frac{1}{2}\|\Delta u\|^{2}_{L^{2}}
\end{align*}
By applying Lemma \ref{lem24}, we obtain
\begin{align*}
	\|u \nabla u\|_{L^2}^2\leq 2c_{\alpha,\beta}\|\nabla u\|_{L^2}^2+\alpha\||u|^{\beta-1} |\nabla u|^2\|_{L^1}\end{align*}
which implies 
\begin{align*}|I_{1}|&\leq c_{\alpha,\beta}\|\nabla u\|_{L^2}^2+\frac{\alpha}{2}\||u|^{\beta-1} |\nabla u|^2\|_{L^1}+\frac{1}{2}\|\Delta  u\|_{L^{2}}^2\\&\leq c_{\alpha,\beta}\|\nabla w\|_{L^2}^2+\frac{\alpha}{2}\||u|^{\beta-1} |\nabla u|^2\|_{L^1}+\frac{1}{2}\|\Delta  u\|_{L^{2}}^2 \end{align*}
	
and
\begin{align*}
	|I_{2}|&\leq \| b\nabla b\|_{L^{2}}\|\Delta u\|_{L^{2}}\\&\leq \|b\|_{\dot{H}^{1}}\|\nabla b\|_{\dot{H}^{\frac{1}{2}}}\|\Delta u\|_{L^{2}}
\end{align*} Interpolating, we find
\begin{align*}
	|I_{2}|&\leq C \|b\|_{\dot{H}^{1}}\|\nabla b\|^{\frac{1}{2}}_{L^{2}} \|\nabla b\|^{\frac{1}{2}}_{\dot{H}^{1}}\|\Delta u\|_{L^{2}},
\end{align*}
and applying interpolation again,
\begin{align*}
	|I_{2}|&\leq C \|\nabla b\|_{L^{2}}\|\nabla b\|^{\frac{1}{2}}_{L^{2}} \|\Delta b\|^{\frac{1}{2}}_{L^{2}}\|\Delta u\|_{L^{2}}\\&\leq C \|\nabla w\|_{L^{2}}\|\nabla w\|^{\frac{1}{2}}_{L^{2}} \|\Delta w\|^{\frac{1}{2}}_{L^{2}}\|\Delta w\|_{L^{2}}
\end{align*}
which, by further interpolation, yields
\begin{align*}
	|I_{2}|&\leq C \|w\|^{1/2}_{L^{2}}\|\Delta w\|^{1/2}_{L^{2}}\| \nabla w\|^{\frac{1}{2}}_{L^{2}}  \|\Delta w\|^{\frac{1}{2}}_{L^{2}}\|\Delta w\|_{L^{2}}\\
	&\leq C\|w\|^{1/2}_{L^{2}}\| \nabla w\|^{\frac{1}{2}}_{L^{2}}\|\Delta w\|^{2}_{L^{2}}\\
	&\leq C\| w\|_{H^{1}}\|\Delta w\|^{2}_{L^{2}}
\end{align*}
Following the same procedure for both $I_{3}$ and $I_{4}$, we arrive at:\\

By incorporating these inequalities into (\ref{eqh1}), we obtain:
	\begin{align*}
		\frac{1}{2}\frac{d}{dt}\|\nabla w\|_{L^{2}}^{2}+\|\Delta w\|_{L^{2}}^{2}+\alpha \||u|^{\beta-1}|\nabla u|^2\|_{L^1}\\+\alpha \frac{(\beta-1)}{4}\||u|^{\beta-3}|\nabla|u|^2|^2\|_{L^1}&\leq 3C\| w\|_{H^{1}}\|\Delta w\|^{2}_{L^{2}}\\&+c_{\alpha,\beta}\|\nabla w\|_{L^2}^2+\frac{\alpha}{2}\||u|^{\beta-1} |\nabla u|^2\|_{L^1}+\frac{1}{2}\|\Delta  w\|_{L^{2}}^2 \\
		&\leq  (3C\| w\|_{H^{1}}+\frac{1}{2})\|\Delta w\|^{2}_{L^{2}}+c_{\alpha,\beta}\|\nabla w\|_{L^2}^2.
	\end{align*}
	
Let $C_{0}$ belong to the interval $(0,\frac{1}{12C})$ consequently,

	\begin{align*}
		 3C\| w^{0}\|_{H^{1}}\leq \frac{1}{2}\Longleftrightarrow  \| w^{0}\|_{H^{1}}\leq \frac{1}{6C}. 
	\end{align*}
	
	Due to the continuity of the function $(t\longmapsto\|w(t)\|_{H^{1}})$, we obtain
	\begin{align*}
		T&=\sup\{t\geq0/ \|w\|_{L^{\infty}([0,t],H^{1})}<\frac{1}{2}(\| w^{0}\|_{H^{1}}+\frac{1}{6C})\}\in (0,\infty]
	\end{align*}
	Since $\frac{1}{2}(\| w^{0}\|_{H^{1}}+\frac{1}{6C}) $ lies within $(\| w^{0}\|_{H^{1}},\frac{1}{6C})$, then for $t\in [0,T)$, we have
	\begin{align*}
	\frac{1}{2}\frac{d}{dt}\|\nabla w\|_{L^{2}}^{2}+\|\Delta w\|_{L^{2}}^{2}+\alpha \||u|^{\beta-1}|\nabla u|^2\|_{L^1}\\+\alpha \frac{(\beta-1)}{4}\||u|^{\beta-3}|\nabla|u|^2|^2\|_{L^1}&\leq 3C  \|w\|_{H^{1}}\|\Delta w_{n}\|^{2}_{L^{2}}\\&+c_{\alpha,\beta}\|\nabla w\|_{L^2}^2+\frac{\alpha}{2}\||u|^{\beta-1} |\nabla u|^2\|_{L^1}+\frac{1}{2}\|\Delta  w\|_{L^{2}}^2 \\
		\frac{1}{2}\frac{d}{dt}\|\nabla w\|_{L^{2}}^{2}+\|\Delta w\|_{L^{2}}^{2}+\alpha \frac{(\beta-1)}{4}\||u|^{\beta-3}|\nabla|u|^2|^2\|_{L^1}&\leq  (3C \big(\frac{1}{2}(\| w^{0}\|_{H^{1}}+\frac{1}{6C})+\frac{1}{2}\big))\|\Delta w\|^{2}_{L^{2}}+c_{\alpha,\beta}\|\nabla w\|_{L^2}^2\\&\leq    \big(\frac{3}{2}C\| w^{0}\|_{H^{1}}+\frac{1}{4}+\frac{1}{2}\big))\| \Delta w\|^{2}_{L^{2}}+c_{\alpha,\beta}\|\nabla w\|_{L^2}^2\\
		&\leq  \big(\frac{3}{2}C\| w^{0}\|_{H^{1}}+\frac{3}{4}\big))\|\Delta w\|^{2}_{L^{2}}+c_{\alpha,\beta}\|\nabla w\|_{L^2}^2.
\end{align*}
Then
		\begin{align*}
		\frac{1}{2}\frac{d}{dt}\|\nabla w\|_{L^{2}}^{2}+(\frac{1}{4}-\frac{3}{2}C\|w^{0}\|_{H^{1}})\|\Delta w\|_{L^{2}}^{2}+\frac{\alpha}{2} \||u|^{\beta-1}|\nabla u|^2\|_{L^1}\\+\alpha \frac{(\beta-1)}{4}\||u|^{\beta-3}|\nabla|u|^2|^2\|_{L^1}&\leq c_{\alpha,\beta}\|\nabla w\|_{L^2}^2.
	\end{align*}
	Consequently, by using \ref{eq01} and for $t\in [0,T)$, we obtain
	
	\begin{align}
		\|\nabla w\|_{L^{2}}^{2}+2(\frac{1}{4}-\frac{3}{2}C\| w^{0}\|_{H^{1}})\int_{0}^{t}\|\Delta w\|_{L^{2}}^{2}\\\nonumber+\alpha \int_{0}^{t} \||u|^{\beta-1}|\nabla u|^2\|_{L^1}+\alpha \frac{(\beta-1)}{2}\int_{0}^{t}\||u|^{\beta-3}|\nabla|u|^2|^2\|_{L^1}&\leq\|\nabla w^{0}\|^{2}_{L^{2}}+2c_{\alpha,\beta}\int_{0}^{t} \|\nabla w\|_{L^2}^2\\&\leq\|\nabla w^{0}\|^{2}_{L^{2}}+c_{\alpha,\beta} \|w^{0}\|_{L^2}^2
	\end{align}
	
	This implies $t=\infty$ under the condition $\frac{3}{2}C\| w^{0}\|_{H^{1}}<\frac{1}{4}$, we get $\forall t\geq 0$ :
	\begin{align}\label{p2eq01}
		\|\nabla w\|_{L^{2}}^{2}+\int_{0}^{t}\|\Delta w\|_{L^{2}}^{2}+\alpha \int_{0}^{t} \||u|^{\beta-1}|\nabla u|^2\|_{L^1}+\alpha \frac{(\beta-1)}{2}\int_{0}^{t}\||u|^{\beta-3}|\nabla|u|^2|^2\|_{L^1}&\leq\|\nabla w^{0}\|^{2}_{L^{2}}+c_{\alpha,\beta} \| w^{0}\|_{L^2}^2.
	\end{align}

	By applying Lemma \ref{lem45} to inequality \ref{eq01}, we get
	$$\|\nabla w(t)\|_{L^{2}}^{2}+\int_0^t\|\Delta w\|^2_{L^2}+\alpha\frac{(\beta-1)}{2}\int_0^t\||u|^{\beta-3} |\nabla|u|^2|^2\|_{L^1}+\alpha \int_0^t\||u|^{\beta-1} |\nabla u|^2\|_{L^1} \leq \|\nabla w^{0}\|^{2}_{L^{2}}e^{2c_{\alpha,\beta}t}.$$
	
	we obtain the global existence for bounded solution.\\
	$\bullet$ {\bf Passage to the limit:} Definitely, these bounds come from the approximate solutions via the Friederich's regularization procedure. The transition to the limit follows using classical argument by combining Ascoli's Theorem and the Cantor Diagonal Process \cite{HB}. Inequalities (\ref{eqth01})-(\ref{eqth02})-(\ref{eqth03}) are given by the above inequalities. And this solution $u$ is in $L^{\infty}(\R^{+},H^1)\cap L^2(\R^{+},\dot{H^{2}}) \cap C(\R^{+},L^{2})$.\\
	$\bullet$ {\bf Uniqueness:} The uniqueness is given by energy method in $L^2$, which ends the proof of Theorem \ref{the1}.\\
	\begin{rem}
		For $\beta=3$ Indeed, the problem is limited to the case $0<\alpha<\frac{1}{2}$ because the inequality (\ref{eqh1}) is unsolvable for these $\alpha$ values.
		To solve our statement, we will add the function  $f(|u|^{2})$ to $|u|^{2}u$.
		We will solve  the magnetohydrodynamic equation with damping by a light function $f$ $(MHD_{f})$ at the next subsection.
	\end{rem}
	\subsection{Proof of Theorem $\ref{the2}$}
	$\bullet$ {\bf{A priori estimates}}  \\
We initiate our analysis by seeking an $L^2(\R^{3})$ uniform estimate for the velocity. To achieve this, we begin by multiplying the first equation of the $(MHD_{f})$ system by $w$ and integrating over $\mathbb{R}^{3}$. Subsequently, we integrate with respect to time, resulting in the inequality: 
	\begin{align}\label{eq1}\|w(t)\|_{L^2}^2+2\int_0^t\|\nabla w\|_{L^2}^2+2\alpha\int_0^t\|f(|u|^{2})|u|^4\|_{L^1}\leq \|w^0\|_{L^2}^2.\end{align}
Furthermore, we proceed by taking the $\dot{H}^{1}(\R^{3})$ scalar product with $w$, leading to:
	\begin{align*}
		\frac{1}{2}\frac{d}{dt}\|\nabla w\|^2+\frac{1}{2}\|\Delta w\|^2_{L^2}+\frac{\alpha}{2} \int_{\R^3}  f'(|u^{2}|) |\nabla|u|^2|^2+\frac{\alpha}{2} \int_{\R^3} f(|u^{2}|) |\nabla|u|^2|^2&\\+ \int_{\R^3}\alpha f(|u^{2}|)|u|^2 |\nabla u|^2 &\leq \sum_{i=1}^{4} |J_{i}|.
	\end{align*}
Where $$ |J_{1}|=| \langle \nabla (u \nabla u),\nabla u\rangle_{L^{2}}|, |J_{2}|=| \langle \nabla (b \nabla b),\nabla u\rangle_{L^{2}}|, |J_{3}|=| \langle \nabla (b \nabla u),\nabla b\rangle_{L^{2}}| \;\textit{and}\; |J_{4}|=| \langle \nabla (u \nabla b),\nabla b\rangle_{L^{2}}|.$$
Using the fact that $$|J_{1}|=|\langle \nabla (u \nabla u),\nabla u\rangle_{L^{2}}|\leq\frac{1}{2}\|u \nabla u\|_{L^2}^2+\frac{1}{2}\|\Delta  u\|_{L^{2}}^2$$ and 
\begin{align*}
	|J_{2}|+|J_{3}|+|J_{4}|&\leq 3 C \|w\|_{H^{1}}\|\Delta w\|^{1/2}_{L^{2}}\| \nabla w\|^{\frac{1}{2}}_{L^{2}}  \|\Delta w\|^{\frac{1}{2}}_{L^{2}}\|\Delta w\|_{L^{2}}\\
	&\leq C\|w\|^{1/2}_{L^{2}}\| \nabla w\|^{\frac{1}{2}}_{L^{2}}\|\Delta w\|^{2}_{L^{2}}\\
	&\leq C\| w\|_{H^{1}}\|\Delta w\|^{2}_{L^{2}}
\end{align*}
	\begin{align*}
			\frac{1}{2}\frac{d}{dt}\|\nabla w\|^2+\frac{1}{2}\|\Delta w\|^2_{L^2}+\frac{\alpha}{2} \int_{\R^3}  f'(|u^{2}|) |\nabla|u|^2|^2&\\+\frac{\alpha}{2} \int_{\R^3} f(|u^{2}|) |\nabla|u|^2|^2+ \int_{\R^3}\alpha f(|u^{2}|)|u|^2 |\nabla u|^2&\leq 3C  \| w\|_{H^{1}}\|\Delta w_{n}\|^{2}_{L^{2}}+\frac{1}{2}\|\Delta  w\|_{L^{2}}^2 \\
		&\leq  (3C \| w\|_{H^{1}}+\frac{1}{2})\|\Delta w\|^{2}_{L^{2}}+\frac{1}{2}\|u\cdot\nabla u\|^{2}_{L^{2}}.
	\end{align*}
		
	Let $C_{0}$ belong to the interval $(0,\frac{1}{12C})$ consequently,

	\begin{align*}
		3C\| w^{0}\|_{H^{1}}\leq \frac{1}{2}\Longleftrightarrow \| w^{0}\|_{H^{1}}\leq \frac{1}{6C}. 
	\end{align*}
	
	Due to the continuity of the function $(t\longmapsto\|w(t)\|_{H^{1}})$, we obtain
	\begin{align*}
		T&=\sup\{t\geq0/ \|w_{n}\|_{L^{\infty}([0,t],H^{1})}<\frac{1}{2}(\| w^{0}\|_{H^{1}}+\frac{1}{6C})\}\in (0,\infty]
	\end{align*}
	Since $\frac{1}{2}(\| w^{0}\|_{H^{1}}+\frac{1}{6C}) $ lies within $(\| w^{0}\|_{H^{1}},\frac{1}{6C})$,

\begin{align*}
	\frac{1}{2}\frac{d}{dt}\|\nabla w\|^2+\frac{1}{2}\|\Delta w\|^2_{L^2}+\frac{\alpha}{2} \int_{\R^3}  f'(|u^{2}|) |\nabla|u|^2|^2&\\+\frac{\alpha}{2} \int_{\R^3} f(|u^{2}|) |\nabla|u|^2|^2+ \int_{\R^3}\alpha f(|u^{2}|)|u|^2 |\nabla u|^2&\leq \big(3C\frac{1}{2}(\| w^{0}\|_{H^{1}}+\frac{1}{6C})+\frac{1}{2}\big)\|\Delta w\|_{L^{2}}^{2}+\frac{1}{2}\|u\nabla u\|^{2}_{L^{2}},
\end{align*}
then
\begin{align*}
	\frac{1}{2}\frac{d}{dt}\|\nabla w\|_{L^{2}}^{2}+(\frac{1}{4}-\frac{3}{2}\| w^{0}\|_{H^{1}})\|\Delta w\|_{L^{2}}^{2}+\frac{\alpha}{2} \int_{\R^3}  f'(|u^{2}|) |\nabla|u|^2|^2&\\+\frac{\alpha}{2} \int_{\R^3} f(|u^{2}|) |\nabla|u|^2|^2+ \int_{\R^3}(\alpha f(|u^{2}|)-\frac{1}{2})|u|^2 |\nabla u|^2&\leq0.
\end{align*}
	To continue the study of our system, we need to discuss according to the position of $\alpha$ with respect to $1/2$:\\
Now, suppose that  for $t\geq0$, put $$M_{t}=\{x\in\R^3:~~\alpha f(|u|^2)-\frac{1}{2}\geq0\}.$$
Clearly, we have
$$x\notin M_{t}\Longleftrightarrow |u(t,x)|^2<  f^{-1}(\frac{1}{2\alpha}).$$
Further
\begin{align*}
	\int_{\R^3}(\frac{1}{2}-\alpha f(|u|^2) |\nabla u|^2&=\int_{M_{t}}(\frac{1}{2}-\alpha f(|u|^2)|u|^2 |\nabla u|^2\\&+\int_{M^c_{t}}(\frac{1}{2}-\alpha f(|u|^2)|u|^2 |\nabla u|^2\\
	&\leq \int_{M^c_{t}}(\frac{1}{2}-\alpha f(|u|^2)|u|^2 |\nabla u|^2\\
	&\leq \frac{1}{2}\int_{M^c_{t}}|u|^2 |\nabla u|^2\\
	&\leq \frac{1}{2}(f^{-1}(\frac{1}{2\alpha}))\int_{M^c_{t}}|\nabla u|^2.
\end{align*}
Since
\begin{align*}
	\int_{M^c_{t}}(\frac{1}{2}-\alpha f(|u|^2))|u|^2 |\nabla u|^2
	&\leq \frac{1}{2}(f^{-1}(\frac{1}{2\alpha}))\|\nabla u\|_{L^2}^2.	
\end{align*}
So, in all cases we have
\begin{align*}\frac{1}{2}\frac{d}{dt}\|\nabla u\|^{2}_{L^{2}}+\frac{1}{2}\|\Delta  u\|^{2}_{L^{2}}+\frac{\alpha}{2} \| f'(|u^{2}|) |\nabla|u|^2|^2\|_{L^{1}}\\+\frac{\alpha}{2} \|f(|u^{2}|) |\nabla|u|^2|^2\|_{L^{1}}&\leq \frac{1}{2}(f^{-1}(\frac{1}{2\alpha}))\|\nabla u\|_{L^2}^2.\end{align*}

Consequently, for $t\in [0,T)$, we obtain
\begin{align}
\|\nabla w\|_{L^{2}}^{2}+2(\frac{1}{4}-\frac{3}{2}C\|\nabla w^{0}\|_{L^{2}})\int_{0}^{t}\|\Delta w\|_{L^{2}}^{2}+\alpha
	 \int_{\R^3} \nabla f(|u^{2}|) |\nabla|u|^2|^2&\leq\|\nabla w^{0}\|^{2}_{L^{2}}+(f^{-1}(\frac{1}{2\alpha}))\int_{0}^{t}\|\nabla u\|_{L^2}^2\\ \label{eq02}&\leq\|\nabla w^{0}\|^{2}_{L^{2}}+f^{-1}(\frac{1}{2\alpha})\int_{0}^{t}\|\nabla w\|_{L^2}^2\\\nonumber&\leq\|\nabla w^{0}\|^{2}_{L^{2}}+f^{-1}(\frac{1}{2\alpha})\| w\|_{L^2}^2.
\end{align}

This implies $t=\infty$ under the condition $\frac{3}{2}C\|\nabla w^{0}\|^{2}_{L^{2}}<\frac{1}{4}$, we get : $\forall t\geq0$
\begin{align}
	\|\nabla w\|_{L^{2}}^{2}+\int_{0}^{t}\|\Delta w\|_{L^{2}}^{2}+\alpha \int_{\R^3} \nabla f(|u^{2}|) |\nabla|u|^2|^2+\alpha \int_{\R^3} f(|u^{2}|) |\nabla|u|^2|^2&\leq\|\nabla w^{0}\|^{2}_{L^{2}}+(f^{-1}(\frac{1}{2\alpha}))\|\nabla w\|_{L^2}^2.
\end{align}

	By applying  Lemma \ref{lem45} to \ref{eq02}, we get
	\begin{align}\label{eq20}
	\|\nabla w\|_{L^{2}}^{2}+\int_{0}^{t}\|\Delta w\|_{L^{2}}^{2}+\alpha \int_{\R^3} \nabla f(|u^{2}|) |\nabla|u|^2|^2+\alpha \int_{\R^3} f(|u^{2}|) |\nabla|u|^2|^2&\leq\|\nabla w^{0}\|^{2}_{L^{2}}e^{a_{\alpha}t} \;\; \forall t\geq0,
	\end{align}
	where, $a_{\alpha}=f^{-1}(\frac{1}{2\alpha}).$
	Definitely, these bounds come from the approximate solutions via the Friederich's regularization procedure. 
	The transition to the limit follows using classical argument by combining Ascoli's Theorem and the Cantor Diagonal Process \cite{HB}. And this solution in $L^{\infty}(\R^{+},H^1)\cap L^2(\R^{+},\dot{H^{2}})$ satisfies (\ref{eq1}) and (\ref{eq2}).\\
	$\bullet${\bf{Uniqueness :}}
	The proof is similar to the one presented in \cite{RAJ}.\\
	Let $w=(u,b)$ and $v=(s,y)$ be two solutions of system $(MHD_{f})$.  We take the difference of the corresponding system, we denote $m=u-s$, $n=b-y$, where $p$ is the pressure term corresponding to $w$ and $q$ is the one corresponding to $v$. Thus, we get for $(t,x)\in \R^{+}\times\R^{3}$,
		$$
	\begin{cases}
		\partial_t m
		-\Delta m+ m.\nabla u  + s\nabla b+y\nabla n+\alpha (f(|u|^{2}) |u|^{2}u -f(|s|^{2}) |s|^{2}s)=\;\;-\nabla (p-q)\\
		\partial_t n-\Delta n+ n.\nabla u +y\nabla m-m\nabla b-s\nabla n =\;\;0\\
		{\rm div}\, m = 0, {\rm div}\, n = 0\\
		m(0,x) =u^0(x), n(0,x) =b^0(x) .
	\end{cases}$$

	Taking the $L^2(\R^{3})$ scalar product of the first equation with $m$ and the $L^2(\R^{3})$ scalar product of the second equation with $n$ , yielding  :
	\begin{align*}
		\frac{1}{2}\frac{d}{dt}(\|m\|^{2}_{L^2}+\|n\|^{2}_{L^2})+\|\nabla m\|^{2}_{L^2}+\|\nabla n\|^{2}_{L^2} +\alpha\langle(f(|u|^2)|u|^2u-f(|s|^2)|s|^2s),m\rangle_{L^{2}}+\langle m\nabla u, m\rangle_{L^2}\\+\langle s \nabla m,m\rangle_{L^{2}}+\langle n\nabla b,m\rangle_{L^{2}}+\langle y\nabla n,m\rangle_{L^{2}}+\langle n \nabla u,n \rangle_{L^{2}}+\langle y \nabla m,n\rangle_{L^{2}}+\langle m \nabla b , n\rangle_{L^{2}}+\langle s\nabla n,n\rangle_{L^{2}}&=0
	\end{align*}

As ${\rm div}\, m = 0, {\rm div}\, n = 0$, we have 
$$\langle s\nabla m,m\rangle_{L^{2}}=0,\langle s\nabla n,n\rangle_{L^{2}}=0 \;\;\textit{and}\;\; \langle\nabla(p-q),m \rangle_{L^{2}}=0$$
also since 
$$\langle y.\nabla n,m\rangle_{L^{2}}+\langle y.\nabla m,n\rangle_{L^{2}}=\langle y.\nabla (m+n),(nm+)\rangle_{L^{2}}-\langle y.\nabla m,m\rangle_{L^{2}}-\langle y.\nabla n,n\rangle_{L^{2}}$$
it vanisges thanks to the divergence free condition and 
	Thanks to the Lemma \ref{lem46}, we get:
	\begin{align*}
		\frac{1}{2}\frac{d}{dt}(\|m\|^{2}_{L^2}+\|n\|^{2}_{L^2})+\|\nabla m\|^{2}_{L^2}+\|\nabla n\|^{2}_{L^2}&\leq |\langle s .\nabla m,m\rangle_{L^{2}}|+|\langle n.\nabla b,m\rangle_{L^{2}}|+|\langle n .\nabla u,n \rangle_{L^{2}}|+|\langle m .\nabla b , n\rangle_{L^{2}}|\\&\leq K_{1}+K_{2}+K_{3}+k_{4}.
	\end{align*}
Since 
\begin{align*}
K_{2}=	\langle n.\nabla b,m\rangle_{L^{2}}&\leq \|nb\|_{L^{2}}\|\nabla m\|_{L^{2}}\\&\leq \|n\|_{L^{3}} \|b\|_{L^{6}}\|\nabla m\|_{L^{2}}\\& \leq c \|n\|_{L^{3}} \|b\|_{\dot{H}^{1}}\|\nabla m\|_{L^{2}}\\& \leq c \|n\|_{\dot{H}^{\frac{1}{2}}} \|b\|_{\dot{H}^{1}}\|\nabla m\|_{L^{2}}\\& \leq  c\|n\|^{\frac{1}{2}}_{\dot{H}^{0}}\|n\|^{\frac{1}{2}}_{\dot{H}^{1}} \|b\|_{\dot{H}^{1}}\|\nabla m\|_{L^{2}}
\end{align*}
as $\|b\|_{\dot{H}^{1}}\leq C_{0}=\|a^{0},b^{0}\|_{\dot{H}^{1}}$
	we get 
	\begin{align*}
		K_{2}=\langle n.\nabla b,m\rangle_{L^{2}}& \leq C_{0} c\|n\|^{\frac{1}{2}}_{\dot{H}^{0}}\|n\|^{\frac{1}{2}}_{\dot{H}^{1}}\|\nabla m\|_{L^{2}}
	\end{align*}
Interpolation inequality leads to 
 \begin{align*}
 	K_{2}& \leq C_{0} \|(n,m)\|^{\frac{1}{2}}_{L^{2}}  \|(n,m)\|^{\frac{3}{2}}_{\dot{H}^{1}}
 \end{align*}
 Yong inequality gives 
  \begin{align*}
 	K_{2}& \leq C'_{0} \|(n,m)\|^{2}_{L^{2}} +\frac{1}{4} \|(n,m)\|^{2}_{\dot{H}^{1}}
 \end{align*}
Following the same procedure for $K_{1}$,  $K_{3}$ and $K_{4}$, we arrive at:
\begin{align*}
	\frac{1}{2}\frac{d}{dt}(\|m\|^{2}_{L^2}+\|n\|^{2}_{L^2})+\|\nabla m\|^{2}_{L^2}+\|\nabla n\|^{2}_{L^2}&\leq 4C_{0}\|(n,m)\|^{2}_{L^{2}} .
\end{align*}
	According to Gronwall Lemma , we obtain :
	$$\|(n,m)\|^{2}_{L^{2}}\leq \|(n,m)(0)\|^2_{L^{2}}e^{Ct},$$	
	but $(n,m)(0)=0$, so $n=m$.\\
\section*{Declarations}

\textbf{Competing Interests}
The author declares no competing interests.

\end{document}